\numberwithin{equation}{section}
\newtheorem{thm}{Theorem}[subsection]
\newtheorem{lem}[thm]{Lemma}
\theoremstyle{definition}
\newtheorem{rmk}[thm]{Remark}
\newcommand{\Z}{\mathbb{Z}}
\newcommand{\id}{\operatorname{id}}
\newcommand{\we}{\wedge}
\newcommand{\rk}{\operatorname{rk}}
\newcommand{\Om}{\Omega}
\newcommand{\Hom}{\operatorname{Hom}}
\newcommand{\Ext}{\operatorname{Ext}}
\newcommand{\Res}{\operatorname{Res}}
\renewcommand{\a}{\alpha}
\renewcommand{\b}{\beta}
\newcommand{\om}{\omega}
\newcommand{\De}{\Delta}
\renewcommand{\P}{{\mathbb P}}
\newcommand{\wt}{\widetilde}
\newcommand{\ot}{\otimes}
\newcommand{\sub}{\subset}
\renewcommand{\mod}{\operatorname{mod}}
\newcommand{\OO}{{\mathcal O}}
\newcommand{\DD}{{\mathcal D}}
\newcommand{\SL}{\operatorname{SL}}
\newcommand{\G}{{\mathbb G}}
\newcommand{\lan}{\langle}
\newcommand{\ran}{\rangle}
\renewcommand{\P}{{\mathbb P}}
\newcommand{\si}{\sigma}
\newcommand{\ga}{\gamma}
\renewcommand{\ker}{\operatorname{ker}}
\newcommand{\im}{\operatorname{im}}
\newcommand{\Sec}{\operatorname{Sec}}
\newcommand{\pa}{\partial}
\title[]{Feigin-Odesskii brackets, syzygies, and Cremona transformations}
\author{Alexander Polishchuk}
\thanks{Supported in part by the NSF grant DMS-2001224, 
and within the framework of the HSE University Basic Research Program and by the Russian Academic Excellence Project `5-100'.}
\address{University of Oregon, National Research University Higher School of Economics, and
Korea Institute for Advanced Study}
\begin{document}

\begin{abstract}
We identify Feigin-Odesskii brackets $q_{n,1}(C)$, associated with a normal elliptic curve of degree $n$, $C\sub \P^{n-1}$,
with the skew-symmetric $n\times n$ matrix of quadratic forms introduced by Fisher in \cite{Fisher-jac} in connection with
some minimal free resolutions related to the secant varieties of $C$. On the other hand, we show that for odd $n$, the generators of
the ideal of the secant variety of $C$ of codimension $3$ give a Cremona transformation of $\P^{n-1}$,
generalizing the quadro-cubic Cremona transformation of $\P^4$. 
We identify this transformation with the one considered in \cite{P98} and find explict formulas for the inverse transformation.
We also find polynomial formulas for Cremona transformations from \cite{P98} associated with higher rank bundles on $C$.

\end{abstract}

\maketitle

\section{Introduction}

Let $C\sub \P^{n-1}=\P(V)$ be an elliptic normal curve of degree $n\ge 3$ over a field $k$ of characteristic $0$. 
With $C$ one can associate canonically (up to rescaling) a Poisson bracket $q_{n,1}(C)$ on the ambient projective space $\P(V)$.
Namely, if $L=\OO(1)|_C$, a line bundle of degree $n$ on $C$, then we can identify $\P(V)$ with
$$\P H^0(C,L)^*\simeq \P \Ext^1(L,\OO)$$
(where we used Serre duality on $C$).
Now there is a natural Poisson bracket on the projective space $\P \Ext^1(L,\OO)$ described in \cite{FO95} and \cite{P98}.

The Poisson brackets $q_{n,1}(C)$ correspond to quadratic brackets on $V$ arising as semi-classical limit of Feigin-Odesskii elliptic
algebras (see \cite{FO89}). The Poisson geometry of $q_{n,1}(C)$ has been well studied (see e.g., \cite{FO95}, \cite{P95}, \cite{P98}, \cite{O-bih}, \cite{OW},
\cite{PS}, \cite{HP-mod}, \cite{HP-bih}). In particular, it is known that for odd $n$
the stratification of $\P^{n-1}$ by the rank of this Poisson bracket is related to the secant varieties of $C$.

The first goal of the present work is to establish a direct relation of $q_{n,1}(C)$ with a certain skew-symmetric matrix of quadratic forms $\Om$ related to 
the secant varieties of $C$, introduced and studied by Fisher in \cite{Fisher-jac}.

Assume first that $n$ is odd and let $r=(n-1)/2$. Then as is well known the secant variety $\Sec^rC$ is a hypersurface of degree $n$ in $\P^{n-1}$.
Let $F(x_1,\ldots,x_n)=0$ be its equation (of degree $n$). Then Fisher showed in \cite{Fisher-jac} that the module of syzygies between the partial derivatives 
$(\frac{\pa F}{\pa x_1}, \ldots, \frac{\pa F}{\pa x_n})$ is generated by the relations 
\begin{equation}\label{FOm-id}
\sum_i \frac{\pa F}{\pa x_i}\Om_{ij}=0,
\end{equation}
for a unique (up to rescaling) skew-symmetric 
$n\times n$ matrix of quadratic forms $\Om$ (see Sec.\ \ref{sec-sec} for details).

In the case when $n$ is even, we set $r=(n-2)/2$. Then the secant variety $\Sec^r C$ is a complete intersection of codimension $2$ given by $F_1=F_2=0$, where 
$\deg(F_1)=\deg(F_2)=r+1$. In this case the $n\times n$ skew-symmetric matrix of quadratic forms $\Om$ is characterized by the condition that the relations
\begin{equation}\label{F12Om-id}
\sum_i \frac{\pa F_a}{\pa x_i}\Om_{ij} =0, \text{ where } a=1,2,
\end{equation} 
generate the module of syzygies between the columns of the $2\times n$ matrix
$(\frac{\pa F_a}{\pa x_j})$.

\medskip

\noindent
{\bf Theorem A}. {\it (i) The formula $\{x_i,x_j\}=\Om_{ij}$ defines a Poisson bracket on $V$ such that the induced Poisson bracket on $\P(V)$ is $q_{n,1}(C)$ (up to rescaling).

\noindent
(ii) If $n$ is odd then $\{x_i,F\}=0$ for every $i$. If $n$ is even then $\{x_i,F_1\}=\{x_i,F_2\}=0$ for every $i$.
%\noindent
%(iii) The Poisson bracket of (i) is the unique homogeneous bivector on $V$ such that the ideal sheaf $\II$ of $\Sec^r C$ on $\P^{n-1}$ is preserved by the induced bivector on $\P^{n-1}$, i.e.,
%$\{\II,\OO\}\sub \II$ ???
%  with any is a Poisson subscheme of $\P^{n-1}$ with respect to the induced Poisson bracket on
%$\P^{n-1}$ 
%(where $r=\lfloor \frac{n-1}{2} \rfloor$).
}

\medskip

Note that part (ii) is stated in \cite[Sec.\ 5]{O-ell} (see also \cite{O-bih}).

%Note that part (iii) implies that there are no nontrivial global vector fields on $\P^{n-1}$ preserving $q_{n,1}(C)$ (for odd $n$ this was proved in \cite[Sec.\ 15]{P95} and in
%\cite{ES}).

Assume now that $n$ is odd and $n\ge 5$.
Our second result gives an explicit formula for the Poisson birational transformation from $(\P^{n-1},q_{n,1}(C))$ to $(\P^{n-1},q_{n,(n-1)/2}(C))$ constructed in \cite{P98}.
Recall that for any relatively prime $(n,k)$, the Feigin-Odesskii bracket $q_{n,k}(C)$ is a natural Poisson bracket on $\P^{n-1}=\P \Ext^1(V,\OO)$, where $V$ is a stable bundle
of rank $k$ and degree $n$ on $C$ (see \cite{FO95}, \cite{P98}). 

Let us start with a line bundle $L=\OO(1)|_C$ of degree $n$, and let $V_0$ be the unique stable bundle of rank $2$ with $\det(V_0)=L$
(it exists since $n$ is odd). In \cite{P98} we constructed a natural birational map (a Cremona transformation)
$$\phi:\P^{n-1}=\P\Ext^1(\OO,L)\dashrightarrow \P H^0(C,V_0)=\P^{n-1}$$ 
by observing that for a generic extension
$$0\to \OO\to E\to L\to 0$$
the bundle $E$ is stable, so there is an isomorphism $E\simeq V_0$, unique up to rescaling. Hence, from the above extension we get a nonzero section of $V_0$,
well defined up to rescaling. Furthermore, we showed in \cite{P98} that $\phi$ is compatible with Poisson structures, where $\P H^0(C,V_0)$ is equipped with a natural Poisson bracket $q$,
such that there exists a Poisson isomorphism
$$(\P H^0(C,V_0),q)\simeq (\P \Ext^1(V,\OO),q_{n,r}(C)),$$
where $V$ is a stable bundle of degree $n$ and rank $r=(n-1)/2$.

Now let us look at the variety $\Sec^{r-1} C$ of codimension $3$ in $\P^{n-1}$. It is known that its homogeneous ideal is generated by $n$ forms $(p_1,\ldots,p_n)$ of degree $r$.

\medskip

\noindent
{\bf Theorem B}. {The above birational map $\phi:\P^{n-1}\dashrightarrow \P^{n-1}$ is given by 
$$(x_1:\ldots:x_n)\mapsto (p_1(x):\ldots:p_n(x)).$$ 
The inverse is given by $(y_1:\ldots:y_n)\mapsto (f_1(y):\ldots:f_n(y))$, where $f_i$ are certain homogeneous polynomials of degree $n-2$.
%for an appropriate choice of coordinates on the target.}

\medskip

For example, when $n=5$, we have $r=2$ and $(p_1,\ldots,p_5)$ are the quadrics generating the ideal of $C$ in $\P^4$. The corresponding Cremona transformation of $\P^4$ is
well known classically as quadro-cubic Cremona transformation (see \cite{Semple}).
The polynomials $(p_1,\ldots,p_n)$ can be computed as (signed) submaximal pfaffians of a skew-symmetric $n\times n$ matrix of linear forms $\Phi$, called the Klein matrix in \cite{Fisher-pf}.
For a Heisenberg invariant curve $C$, there is a simple formula for $\Phi$ (see \cite[Prop.\ 3.7]{Fisher-pf}).
In the course of the proof of Theorem B we also give a recipe for computing the polynomials $f_i$ in terms of $\Phi$.

\medskip

\noindent
{\bf Corollary C}. {\it For $r\ge 2$, any hypersurface of degree $r$ in $\P^{2r}$ containing $\Sec^{r-1} C$, where $C\sub \P^{2r}$ is an elliptic normal curve, is rational.}

\medskip

Indeed, the corresponding birational map $\phi$ induces a birational map from such a hypersurface to a hyperplane in $\P^{2r}$.

Finally, we find polynomial formulas for similar Cremona transformations associated with stable bundles of higher rank on $C$ (these transformations were defined
in \cite{P98}). Namely,
for $k>0$ relatively prime to $n$, let $V_k$ be the unique stable vector bundle of rank $k$ on $C$ with $\det V_k=L$ (where $\deg L=n$).
Assume that $k+1$ is still relatively prime to $n$. Then we can also consider the bundle $V_{k+1}$ of rank $k+1$.
Now all three spaces $H^0(C,V_{k+1})=\Hom(\OO,V_{k+1})$, $\Hom(V_{k+1},V_k)$ and $\Ext^1(V_k,\OO)$ have dimension $n$.
As in \cite{P98}, let us consider the closed subvariety
$$Z\sub \P \Hom(\OO,V_{k+1})\times \P \Hom(V_{k+1},V_k)\times \P \Ext^1(V_k,\OO)$$
which is the closure of the locus of $(\a,\b,\ga)$ such that 
$$\OO\rTo{\a} V_{k+1}\rTo{\b} V_k \rTo{\ga} \OO[1]$$
is an exact triangle. Then the projection from $Z$ to any of the factors is birational, so we get three Cremona transormations
\begin{align*}
& \phi:\P \Ext^1(V_k,\OO)\dashrightarrow \P \Hom(V_{k+1},V_k), \ \ \psi:\P \Hom(V_{k+1},V_k)\dashrightarrow \P H^0(C,V_{k+1}), \\ 
& \rho:\P H^0(C,V_{k+1})\dashrightarrow \P \Ext^1(V_k,\OO),
\end{align*}
such that $\rho\psi\phi=\id$. 

\medskip

\noindent
{\bf Theorem D}. {\it Let us define numbers $l$ and $m$, where $0<l<n$, $0<m<n$, by
$$l\equiv -(k+1)^{-1} \mod (n), \ \ m\equiv -1-k^{-1} \mod (n).$$
Then there exist linear maps
\begin{align*}
& \wt{\phi}:S^l(\Ext^1(V_k,\OO))\to \Hom(V_{k+1},V_k), \ \ \wt{\psi}:S^k(\Hom(V_{k+1},V_k))\to H^0(C,V_{k+1}),\\
& \wt{\rho}:S^m(H^0(C,V_{k+1}))\to \Ext^1(V_k,\OO),
\end{align*}
inducing $\phi$, $\psi$ and $\rho$. 
}

\medskip

We give an explicit construction for the polynomial map $\wt{\psi}$, and then we construct $\wt{\phi}$ and $\wt{\rho}$ using autoequivalences of the derived category of $C$.
Note that in the case $k=1$, the map $\wt{\psi}$ is a linear isomorphism, while the polynomials giving $\wt{\phi}$ and $\wt{\rho}$ have the same degrees as the polynomial maps of Theorem B.
In fact, it is easy to see that in this case $\wt{\phi}$ coincides with $(p_1,\ldots,p_n)$, and we expect $\wt{\rho}$ to coincide with $(f_1,\ldots,f_n)$ constructed in the proof of Theorem B
(see Remark \ref{two-map-rem}).

\bigskip

\noindent
{\it Acknowledgment}. This paper was inspired by a discussion with Volodya Rubtsov in 2017 and by his paper \cite{Rubtsov}, 
where the Feigin-Odesskii brackets $q_{5,1}(C)$ and the corresponding quadro-cubic Cremona transformations of $\P^4$ are studied. 
I am very grateful to Volodya for sharing his insights on Feigin-Odesskii brackets and for drawing my attention to the works \cite{Fisher-pf}, \cite{Fisher-jac} on
secant varieties of normal elliptic curves. 
% and to the work \cite{HKS} on Cremona transformations. DO we need \cite{HKS}???

\section{Preliminaries}

\subsection{Quadratic Poisson brackets}

Let $V$ be a vector space of dimension $n$. By a quadratic Poisson bracket on $V$ we mean a Poisson structure on the algebra $S(V^*)$ compatible with the grading, i.e.,
such that the Poisson brackets of linear forms are given by homogeneous quadratic forms. Equivalently, the corresponding bivector on $V$ has to be homogeneous (i.e., preserved
by the natural $\G_m$-action).
It is well known that every such bracket induces a Poisson bracket on
the projective space $\P(V)$ and that every Poisson bracket on $\P(V)$ comes from a quadratic Poisson bracket on $V$, not necessarily unique (see \cite{B}, \cite{P95}).

Different liftings of a bivector on $\P(V)$ to a homogeneous bivector on $V$ differ by bivectors on $V$ of the form $\lan x,y\ran=A(x)y-A(y)x$, where $x,y\in V^*$,
for some linear operator $A:V^*\to V^*$. This easily implies that different liftings of a Poisson bracket $\Pi$ on $\P(V)$ to quadratic Poisson brackets on $V$ are numbered by
Poisson vector fields for $\Pi$ on $\P(V)$, i.e., global vector fields on $\P(V)$ preserving $\Pi$.

The following simple criterion is helpful in identifying a lifting of a Poisson bracket on $\P(V)$ to a quadratic Poisson bracket on $V$.

\begin{lem}\label{Cas-lem}
Let $\{\cdot, \cdot\}$ be a homogeneous bivector on $V$, inducing a Poisson bracket on $\P(V)$. Assume that there exists a nonzero homogeneous polynomial $F\in S^dV^*$ of
positive degree $d$ such that $\{x,F\}=0$ for every $x\in V^*$. 
Then $\{\cdot,\cdot\}$ is a Poisson bracket, i.e., it satisfies the Jacobi identity.
\end{lem}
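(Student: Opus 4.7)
The plan is to prove the Jacobi identity for $\pi$ by showing that the Schouten bracket $[\pi,\pi]$ vanishes as a trivector field on $V$, combining two complementary inputs: (a) $[\pi,\pi]$ descends to zero on $\P(V)$, and (b) $F$ is a Casimir of $\pi$.

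\textbf{Step 1 (ansatz from descent).} Write $E=\sum_i x_i\,\pa_{x_i}$ for the Euler vector field on $V$. Any homogeneous bivector on $V$ descends to a bivector on $\P(V)$ by projection modulo the vertical direction $\lan E\ran$, and the Jacobiator of the induced bracket is the corresponding projection of $[\pi,\pi]$. The hypothesis that the bracket on $\P(V)$ is Poisson therefore yields
$$[\pi,\pi]\;=\;E\we\si$$
for some (polynomial, quadratic) bivector field $\si$ on $V$. It then suffices to prove $E\we\si=0$.

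\textbf{Step 2 (the Casimir kills the remainder).} The hypothesis $\{x,F\}=0$ for all $x\in V^*$ extends by the Leibniz rule to $\{h,F\}=0$ for every $h\in S(V^*)$, i.e., $F$ is a Casimir for $\pi$. For any $f,g\in S(V^*)$ the three terms $\{\{f,g\},F\}$, $\{\{g,F\},f\}$, $\{\{F,f\},g\}$ then all vanish, so $\iota_{dF}\,[\pi,\pi]=0$. Substituting the ansatz, and using Euler's identity $E(F)=d\cd F$, gives
$$d\cd F\cd\si\;=\;E\we\iota_{dF}\si.$$
At any point $p$ of the Zariski-dense open set $\{F\ne 0\}$, this writes $\si(p)$ as $E(p)$ wedged with a vector, so $E(p)\we\si(p)=0$. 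By Zariski density $E\we\si$ vanishes on all of $V$, whence $[\pi,\pi]=E\we\si=0$.

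\textbf{Main obstacle.} The only real content is Step 1: translating the Jacobi identity on $\P(V)$ into the algebraic statement that $[\pi,\pi]$ lies in the image of $E\we(-)$ on $V$. Once that descent/ansatz is set up, Step 2 is just a short contraction computation combined with a Zariski-density argument.
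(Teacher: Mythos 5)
Your argument is correct, but it is organized quite differently from the paper's. You work at the level of the Schouten calculus: descent to $\P(V)$ gives the ansatz $[\pi,\pi]=E\we\si$, the Casimir hypothesis gives $\iota_{dF}[\pi,\pi]=0$, and the contraction identity $\iota_{dF}(E\we\si)=E(F)\si-E\we\iota_{dF}\si$ together with Euler's relation $E(F)=dF$ kills $E\we\si$ on $\{F\neq 0\}$, hence everywhere. The paper instead argues entirely with the Jacobiator $J(x,y,z)$ of polynomials: it divides by $F$ to produce degree-zero rational functions $x/F,y/F,z/F$ on $\P(V)$, where the Jacobi identity is known, uses the Casimir property to compute $J(x/F,y/F,z/F)=J(x,y,z)/F^3$, and then recovers the identity for linear forms from the polarization formula $J(x^d,x^{d-1}y,x^{d-1}z)=dx^{3d-3}J(x,y,z)$. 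The two proofs consume the same two hypotheses, but yours is the more conceptual packaging and the paper's the more elementary and self-contained one: your Step 1 (that the Jacobiator of the induced bracket on $\P(V)$ is the projection of $[\pi,\pi]$ modulo $E\we(-)$, so that $[\pi,\pi]=E\we\si$) is exactly the standard characterization of quadratic bivectors descending to Poisson structures from \cite{B}, \cite{P95}, which you assert with only a sketch; the paper's $x/F$ trick effectively reproves the needed consequence of that fact from scratch. If you want your version to be airtight you should either cite that characterization or note that a trivector projecting to zero lies pointwise in $E(v)\we{\bigwedge}^2 V$ on $V\setminus\{0\}$, which suffices for your Step 2 since only the vanishing of $E\we\si$ on the dense set $\{F\neq 0\}$ is used (so $\si$ need not even be taken polynomial).
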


\begin{proof}
Set $J(x,y,z):=\{\{x,y\},z\}+\{\{y,z\},x\}+\{\{z,x\},y\}$. First, let us check that $J(x,y,z)=0$ for all $x,y,z\in S^dV^*$.
Indeed, $x/F$, $y/F$ and $z/F$ are local functions on $\P(V)$, so 
we know that
$$J(\frac{x}{F},\frac{y}{F},\frac{z}{F})=0.$$
Furthermore, since $\{x,F\}=\{y,F\}=\{z,F\}=0$, we get $\{\frac{x}{F},\frac{y}{F}\}=\frac{\{x,y\}}{F^2}$, etc., hence,
$$J(\frac{x}{F},\frac{y}{F},\frac{z}{F})=\frac{J(x,y,z)}{F^3}.$$
Thus, we get that $J(x,y,z)=0$ for all $x,y,z\in S^dV^*$.

Hence, if $x,y,z$ are in $V^*$ then we have 
$J(x^d,x^{d-1}y,x^{d-1}z)=0$. Now the vanishing of $J(x,y,z)$ follows from the formal identity
$$J(x^d,x^{d-1}y,x^{d-1}z)=dx^{3d-3}J(x,y,z).$$
\end{proof}

\subsection{Formula for Feigin-Odesskii bracket}

Let us recall the formula for the Poisson bivector $\Pi=q_{n,1}(C)$ on $\P \Ext^1(L,\OO)\simeq \P H^0(C,L)^*$ in terms of Szeg\"o kernel, established in \cite{HP-bih}
(equivalent formulas are also stated in \cite{O-bih} and in \cite{OW}).

Let $p\in C$ be a point, and let us fix a trivialization of $\om_C$.
The Szeg\"o kernel is the unique section $S\in H^0(C\times C,\OO(p)\boxtimes \OO(p)(\De))$ such that $\Res_{\De}(S)=1$ and $S(y,x)=-S(x,y)$.
If $C$ is identified with a plane cubic $y^2=P(x)$ and the trivializing global differential is $dx/2y$, then one has
$$S=\frac{y_1+y_2}{x_2-x_1}$$
(see \cite[Sec.\ 5.1.2]{HP-bih}).

To give a Poisson bivector $\Pi$ we need to specify for each $\phi\in H^0(L)^*$ a skew-symmetric form $\Pi_\phi$ on the cotangent space 
$$T^*_\phi \P H^0(L)^*\simeq \ker(\phi)\sub H^0(L).$$
This form is given by the formula (see \cite[Lem.\ 2.1, Prop.\ 5.8]{HP-bih})
\begin{equation}\label{Pi-phi-formula}
\Pi_\phi(s_1\we s_2)=\pm \lan \wt{\phi}\ot \wt{\phi}, S\cdot (s_1\boxtimes s_2-s_2\boxtimes s_1)\ran,
\end{equation}
for $s_1,s_2\in \ker(\phi)$.
Here we consider $H^0(L)$ as a subspace in $H^0(L(p))$ and denote by $\wt{\phi}$ any extension of $\phi$ to a functional on $H^0(L(p))$.
Further, we view $s_1\boxtimes s_2-s_2\boxtimes s_1$ as a section of $L\boxtimes L$ on $C\times C$, vanishing on the diagonal.
Hence, the product $S\cdot (s_1\boxtimes s_2-s_2\boxtimes s_1)$ is a global section of $L(p)\boxtimes L(p)$.

The fact that the right-hand side of \eqref{Pi-phi-formula} does not depend on a choice of $\wt{\phi}$ follows from the existence of a linear operator
$D:H^0(L)\to H^0(L(p))$ such that for any $s_1,s_2\in H^0(L)$ one has
$$S\cdot (s_1\boxtimes s_2-s_2\boxtimes s_1)+D\{s_1,s_2\}\in H^0(L)\ot H^0(L), \text{ where}$$
$$D\{s_1,s_2\}:=s_1\boxtimes D(s_2)+D(s_2)\boxtimes s_1-s_2\boxtimes D(s_1)-D(s_1)\boxtimes s_2$$
(so $D$ cancels the poles at $p$). The existence of such $D$ was first observed in \cite{O-bih} (see also \cite{OW} and \cite[Sec.\ 5.3]{HP-bih}).
For $s_1,s_2\in \ker(\phi)$ this allows to rewrite the formula \eqref{Pi-phi-formula} as
\begin{equation}\label{Pi-phi-formula-bis}
\Pi_\phi(s_1\we s_2)=\pm \lan \phi\ot \phi, S\cdot (s_1\boxtimes s_2-s_2\boxtimes s_1+D\{s_1,s_2\})\ran.
\end{equation}

\subsection{Secant varieties and syzygies}\label{sec-sec}

Let us recall some results of Fisher in \cite{Fisher-pf} and \cite{Fisher-jac}  on secant varieties of a normal elliptic curve $C\sub \P^{n-1}$.

First, assume that $n$ is odd, and set $r=(n-1)/2$. 
The variety $\Sec^r C$ is a hypersurface in $\P^{n-1}$ given by $F=0$, where $\deg(F)=n$.
The first result, \cite[Thm.\ 1.1(i)]{Fisher-jac} gives a form of the minimal free resolution of the ideal $(\frac{\pa F}{\pa x_1}, \ldots, \frac{\pa F}{\pa x_n})$. 
Namely, there is a unique
skew-symmetric $n\times n$ matrix $\Om$ of quadratic forms, such that the following complex is exact, where $R=k[x_1,\ldots,x_n]$:
\begin{equation}\label{Om-complex-odd}
0\to R(-2n)\rTo{\nabla^T} R(-n-1)^n\rTo{\Om} R(-n+1)^n\rTo{\nabla}R,
\end{equation}
where $\nabla=(\frac{\pa F}{\pa x_1}, \ldots, \frac{\pa F}{\pa x_n})$. 
Note that the fact that this is a complex corresponds to the identity \eqref{FOm-id}.

In the case when $n$ is even, let us set $r=(n-2)/2$. Then the variety $\Sec^r C$ is a complete intersection of codimension $2$ given by $F_1=F_2=0$, where $\deg(F_i)=r+1$
(see \cite[Sec.\ 8]{vBH}, \cite[Thm.\ 9.1]{Fisher-jac}). Now setting
$$\nabla=\left(\begin{matrix} \frac{\pa F_1}{\pa x_1} & \ldots & \frac{\pa F_1}{\pa x_n} \\ \frac{\pa F_2}{\pa x_1} & \ldots & \frac{\pa F_2}{\pa x_n} \end{matrix}\right),$$
the statement of  \cite[Thm.\ 1.1(ii)]{Fisher-jac} is that there  is a unique
skew-symmetric $n\times n$ matrix $\Om$ of quadratic forms, such that the following complex is exact:
\begin{equation}\label{Om-complex-even}
0\to R(-n)^2\rTo{\nabla^T} R(-r-1)^n\rTo{\Om} R(-r)^n\rTo{\nabla}R^2.
\end{equation}
Again, the fact that this is a complex corresponds to the identities \eqref{F12Om-id}.
 
Next, in the case when $n$ is odd, we will use the following description of the minimal free resolution of the ideal of $\Sec^{r-1} C$ in $R[x_1,\ldots,x_n]$, where $r=(n-1)/2$
(see \cite[Sec.\ 4]{Fisher-pf}). This ideal is generated by $n$ linearly independent forms of degree $r$, $(p_1,\ldots,p_n)$, and there exists a unique skew-symmetric $n\times n$ matrix
of linear forms $\Phi$ such that the following complex is exact:
\begin{equation}\label{Klein-complex}
0\to R(-n)\rTo{p^T} R(-r-1)^n\rTo{\Phi} R(-r)^n\rTo{p} R,
\end{equation}
where $p=(p_1,\ldots,p_n)$.

Furthermore, we will use the following description of $\Phi$, which is called {\it Klein matrix} in \cite{Fisher-pf}. 
Let $V_0$ be the unique stable bundle on $C$ with $\det(V_0)\simeq L=\OO(1)|_C$. Then identifying the space of linear 
forms on $\P^{n-1}$ with $H^0(C,L)$, the skew-symmetric matrix $\Phi$
corresponds to the natural $H^0(C,L)$-valued skew-symmetric pairing on $H^0(C,V_0)$,
\begin{equation}\label{Klein-matrix}
\Phi:{\bigwedge}^2H^0(C,V_0)\to H^0(C,\det(V_0))\simeq H^0(C,L).
\end{equation}

\begin{rmk}
Both complexes \eqref{Om-complex-odd} and \eqref{Klein-complex} for odd $n$ are examples of Buchsbaum-Eisenbud pfaffian presentations of the corresponding Gorenstein ideals of
height $3$ (see \cite{BE}). In particular, the generators of the ideal ($\frac{\partial F}{\partial x_i}$ in the former case and $p_i$ in the latter case), are equal to the (signed) submaximal pfaffians
of the corresponding skew-symmetric matrix ($\Om$ in the former case and $\Phi$ in the latter case).
\end{rmk}

\begin{rmk} The fact that the submaximal pfaffians $(p_i)$ vanish on $\Sec^{r-1}C\sub \P H^0(C,L)^*$ means that for $\xi\in \Sec^{r-1}C$ the skew-symmetric form $\Phi_\xi$ on $H^0(C,V_0)$
has rank $\le n-3$. This can be explained geometrically as follows. A point $\xi\in \Sec^{r-1}C$ corresponds to a functional that factors as $H^0(C,L)\to H^0(C,L|_D)\to k$,
where $D$ is an effective divisor of degree $r-1$ on $C$. But then the $3$-dimensional subspace $H^0(C,V_0(-D))\sub H^0(C,V_0)$ is in the kernel of $\Phi_\xi$.
\end{rmk}

\subsection{Skew-symmetric matrices of linear forms}

Let $A$ and $B$ be a pair of $n$-dimensional vector spaces, and let $\Phi\in {\bigwedge}^2A^*\ot B$.
Typically one views $\Phi$ as a skew-symmetric matrix of linear forms on $B^*$. However, $\Phi$ also gives
a linear map $\nu_{\Phi}:A\to B\ot A^*$, so we can view it as an $n\times n$ matrix of linear forms on $A$.

More precisely, consider the polynomial algebra $R=S^*(A^*)$. Then we can view $\nu_{\Phi}$ as an $R$-valued linear map
$A\to B\ot R$, and consider the corresponding matrix of minors,
$${\bigwedge}^{n-1}(\nu_{\Phi}):{\bigwedge}^{n-1}A\to {\bigwedge}^{n-1}B\ot R$$
with entries in $S^{n-1}(A^*)\sub R$.

\begin{lem}\label{si-Phi-lem}
(i) Consider the composed map
\begin{align*}
&\si(\Phi):A^*\rTo{\sim} \det(A)^*\ot {\bigwedge}^{n-1}(A)\rTo{\id\ot {\bigwedge}^{n-1}(\nu_{\Phi})} \det(A)^*\ot {\bigwedge}^{n-1}(B)\ot R\\
&\simeq \det(A)^*\ot \det(B)\ot B^*\ot R.
\end{align*}
Then for every $\xi\in A^*$, the element $\si(\Phi)(\xi)$ (of the target $R$-module) is divisible by $\xi$.

\noindent
(ii) There exists a unique element
$$\si_{n-2}(\Phi)\in \det(A)^*\ot \det(B)\ot B^*\ot S^{n-2}(A^*),$$
such that 
$$\si(\Phi)(\xi)=\si_{n-2}(\Phi)\cdot \xi$$
(where the product is in $R$).

\noindent
(iii) Viewing $\si_{n-2}(\Phi)$ as a $\det(A)^*\ot\det(B)\ot R$-valued linear form on $B$, we have 
$$\si_{n-2}(\Phi)\circ \nu_\Phi=0$$
as a $\det(A)^*\ot\det(B)\ot R$-valued linear form on $A$.

\noindent
(iv) Assume that $\Phi(a,?):A\to B$ has rank $\ge n-1$ for some $a\in A$. Then $\si_{n-2}(\Phi)(a)\neq 0$.
\end{lem}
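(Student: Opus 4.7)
The plan is to pick bases $e_1, \dots, e_n$ of $A$ and $f_1, \dots, f_n$ of $B$, represent $\nu_\Phi$ (extended $R$-linearly) as an $n \times n$ matrix $M(x) = (M_{ki}(x))$ with $M_{ki}(x) = \sum_m \Phi^k_{im} x_m$, and extract (i)--(iv) from the classical adjugate identities $M \cdot \operatorname{adj}(M) = \operatorname{adj}(M) \cdot M = (\det M) \cdot I$. The key observation that drives everything is that skew-symmetry of $\Phi$ forces $M(x) \cdot x = 0$ in $B \otimes R$: the $k$-th coordinate $\sum_{i,m} \Phi^k_{im} x_i x_m$ vanishes because $\Phi^k_{im}$ is antisymmetric in $(i,m)$ while $x_i x_m$ is symmetric. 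Consequently $\det M(x) = 0$ in $R$ and the tautological vector $x = (x_1, \dots, x_n)^T$ lies in $\ker M$.

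For (i) and (ii), I apply $M \cdot \operatorname{adj}(M) = 0$: each column of $\operatorname{adj}(M)$ lies in $\ker M \otimes_R k(x)$. If the generic rank of $M$ is at most $n-2$, then all $(n-1) \times (n-1)$ minors vanish as polynomials, so $\operatorname{adj}(M) \equiv 0$ and the claims are trivial with $\sigma_{n-2}(\Phi) = 0$. Otherwise $\ker M \otimes_R k(x)$ is one-dimensional and spanned by $x$, so the $j$-th column of $\operatorname{adj}(M)$ equals $g_j(x) \cdot x$ for some $g_j \in k(x)$. Since $g_j(x) \cdot x_i$ is a polynomial for every $i$ and $\gcd(x_1, \dots, x_n) = 1$ in $R$, we actually have $g_j \in R$, and degree considerations place it in $S^{n-2}(A^*)$. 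A direct unwinding of the isomorphism $A^* \simeq \det(A)^* \otimes \bigwedge^{n-1}(A)$ identifies the $f_j^*$-coefficient of $\sigma(\Phi)(e_i^*)$ with the $(i, j)$-entry of $\operatorname{adj}(M)$, namely $x_i g_j(x)$. Setting $\sigma_{n-2}(\Phi) \in \det(A)^* \otimes \det(B) \otimes B^* \otimes S^{n-2}(A^*)$ to be the intrinsic form of the vector $(g_1, \dots, g_n)$ (carrying the $\det(A)^* \otimes \det(B)$ factors from the chosen isomorphisms), one obtains $\sigma(\Phi)(\xi) = \xi \cdot \sigma_{n-2}(\Phi)$ by linearity in $\xi$; uniqueness in (ii) is immediate since $R$ is a domain.

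Parts (iii) and (iv) are then corollaries. For (iii), the dual identity $\operatorname{adj}(M) \cdot M = 0$ gives, on the $(i, a)$-entry, $x_i \sum_j g_j(x) M_{ja}(x) = 0$; since $R$ is a domain, $\sum_j g_j M_{ja} = 0$ in $R$, which is precisely $\sigma_{n-2}(\Phi) \circ \nu_\Phi = 0$. For (iv), the hypothesis combined with $\Phi(a, a) = 0$ yields $\operatorname{rank} \Phi(a, -) = n - 1$ and $\ker \Phi(a, -) = k \cdot a$, so $\operatorname{adj}(M(a))$ has rank exactly $1$. Since its columns lie in $\ker M(a) = k \cdot a$, we can factor $\operatorname{adj}(M(a)) = a \cdot w^T$ for some nonzero row vector $w$. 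Comparing with $\operatorname{adj}(M(x))_{ij}|_{x = a} = a_i g_j(a)$ and picking any $i$ with $a_i \neq 0$ gives $w_j = g_j(a)$, hence $\sigma_{n-2}(\Phi)(a)$ is proportional to $w$ and nonzero.

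The main obstacle is the polynomiality step in (i)--(ii): the generic-rank analysis naturally produces rational scalars $g_j \in k(x)$ describing the columns of $\operatorname{adj}(M)$ as multiples of the syzygy $x$, and promoting this to a factorization inside the polynomial ring $R$ requires the divisibility argument using $\gcd(x_1, \dots, x_n) = 1$. Everything else amounts to bookkeeping with signs and $\det$ factors in the isomorphism $A^* \simeq \det(A)^* \otimes \bigwedge^{n-1}(A)$.
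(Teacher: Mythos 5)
Your proof is correct, and it rests on the same two pillars as the paper's argument: the identification of $\si(\Phi)$ with the matrix of $(n-1)\times(n-1)$ minors (i.e.\ the adjugate) of $\nu_\Phi$, and the tautological syzygy coming from skew-symmetry --- your $M(x)\cdot x=0$ is the paper's identity $\nu_{\Phi,\xi,a}(a)=0$. The genuine divergence is in how (i)--(ii) are organized. The paper first proves divisibility of $\si(\Phi)(\xi)$ by $\xi$ one $\xi$ at a time (a minor vanishes on the hyperplane $\lan\xi\ran^\perp$ because the specialized matrix always has $a$ in its kernel there), and then needs a separate gluing lemma to show the resulting quotients $f_\xi$ do not depend on $\xi$, proved by specializing along the pencil $\xi_1+c\xi_2$ and varying $c$. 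You instead obtain the uniform factorization $\operatorname{adj}(M)=x\cdot g^{T}$ in one stroke from the rank-one structure of the adjugate over the fraction field $k(x)$, and descend from $k(x)$ to $R$ via the $\gcd(x_1,\dots,x_n)=1$ argument in the UFD $R$; the price is that you must (and correctly do) treat the degenerate case of generic rank $\le n-2$ separately, where $\si_{n-2}(\Phi)=0$. Your (iii) and (iv) are coordinate versions of the paper's: $\operatorname{adj}(M)\cdot M=0$ is exactly the paper's vanishing of $\si(\Phi)(\xi)$ composed with $\Phi|_{\lan\xi\ran^\perp}$, after cancelling $\xi$, and the rank-one analysis of $\operatorname{adj}(M(a))$ matches the paper's deduction of (iv) from the nonvanishing of $\si(\Phi)$ at $a$. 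The adjugate formulation is more self-contained and makes the polynomiality issue explicit; the paper's hyperplane formulation avoids choosing bases and the passage through $k(x)$. The only loose end in your write-up is the sign and determinant-factor bookkeeping in matching $\si(\Phi)(e_i^*)$ with the $i$-th row of $\operatorname{adj}(M)$, which you flag and which is indeed routine.
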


\begin{proof}
(i) Let us think of $\nu_\Phi$ as a $\Hom(A,B)$-valued function on $A$. Then $\si(\Phi)(\xi)$ is given by the $(n-1)\times(n-1)$-minors
of the corresponding $\Hom(\lan \xi\ran^\perp,B)$-valued function on $A$, 
$$\nu_{\Phi,\xi}:\lan \xi\ran^\perp\to B\ot A^*,$$
where we use the restriction from $A$ to the hyperplane $\lan \xi\ran^\perp\sub A$.
The condition that all $n\times n$-minors of $\nu_{\Phi,\xi}$ are divisible by $\xi$ is equivalent to the condition that for every $a\in \lan \xi\ran^\perp$,
the induced linear map
$$\nu_{\Phi,\xi,a}:\lan \xi\ran^\perp\to B$$
has rank $<n-1$. It is enough to check that for $a\neq 0$, the map $\nu_{\Phi,\xi,a}$ has nonzero kernel.
But this follows from the identity
$$\nu_{\Phi,\xi,a}(a)=0$$
which holds by the skew-symmetry of $\Phi$.

\noindent
(ii) 
%Note that we can apply (i) to an algebraic closure of $K$. Hence, it is enough to prove the assertion assuming that $k$ is infinite.
Note that the target of $\si(\Phi)$ is a free $R$-module. Thus, it is enough to prove that if a linear map
$\si:A^*\to R=S(A^*)$ has the property that $\si(\xi)$ is divisible by $\xi$ for every $\xi\in A^*$, then $\si(\xi)=f\cdot \xi$ for a fixed polynomial
$f\in R$. 

Indeed, assume that for every $\xi\neq 0$, we have 
$$\si(\xi)=f_{\xi}\cdot \xi$$
for some $f_\xi\in R$. We need to check that $f_{\xi_1}=f_{\xi_2}$ for a linearly independent pair $\xi_1,\xi_2\in A^*$.
For any $c\in k^*$ we have
$$\si(\xi_1+c\xi_2)=f_{\xi_1}\xi_1+f_{\xi_2}\xi_2=f_c(\xi_1+c\xi_2),$$
where $f_c:=f_{\xi_1+c\xi_2}$. This implies that $(f_c-f_{\xi_1})\xi_1$ is divisible by $\xi_2$, so we can write
$$f_c=f_{\xi_1}+f \xi_2.$$
From this we get
$$f_{\xi_2}-f_{\xi_1}=(c^{-1}x_1+x_2)\cdot f,$$
so $f_{\xi_2}-f_{\xi_1}$ is divisible by $c^{-1}x_1+x_2$. Since $k$ is infinite, this implies that $f_{\xi_2}-f_{\xi_1}=0$.

\noindent
(iii) We use again the fact that for a nonzero $\xi\in A^*$, $\si(\Phi)(\xi)$, which is an $R$-valued functional on $B$, is given by ${\bigwedge}^{n-1}(\Phi|_{\lan\xi\ran^\perp})$,
where we consider the restriction 
$$\Phi|_{\lan \xi\ran^\perp}:\lan\xi\ran^\perp\to B\ot R,$$
where $\dim \lan\xi\ran^\perp=n-1$. It follows that the composition
$$\lan\xi\ran^\perp\rTo{\Phi} B\ot R\rTo{\si(\Phi)(\xi)} \det(A)^*\ot \det(B)\ot R$$
is zero. By part (ii) this implies that the composition 
$$\si_{n-2}(\Phi)\circ \Phi:A\to \det(A)^*\ot \det(B)\ot R$$
has zero restriction to $\lan \xi\ran^\perp$ for every $\xi\neq 0$. Hence, this composition is zero.

\noindent
(iv) The assumption implies that $\si(\Phi)(a)\neq 0$. Now the assertion follows from (ii).
\end{proof}

\section{Proofs}

\subsection{Proof of Theorem A}

Working over an algebraic closure of $k$, we can assume that $L=\OO(np)$ for a point $p\in C$.
In this case Fisher gives the following explicit description of the matrix 
$$\Om:{\bigwedge}^2 H^0(C,L)\to S^2 H^0(C,L)$$ 
in \cite[Sec.\ 5]{Fisher-jac}. For $s_1,s_2\in H^0(C,L)$ and $\phi\in H^0(C,L)^*$, one has
$$\frac{1}{n}\Om(s_1,s_2)(\phi)=[S\cdot (s_1\boxtimes s_2-s_2\boxtimes s_1)](\phi,\phi)+\DD\{s_1,s_2\}(\phi,\phi),$$
for some operator $\DD:H^0(C,\OO(np))\to H^0(C,\OO((n+1)p))$ (which is given up to rescaling by $f\mapsto df/\om$, where $\om$ is a global differential on $C$).

Comparing the above formula for $\Om$ with \eqref{Pi-phi-formula-bis} we immediately deduce that the bivector on $\P^{n-1}$ induced by $\Om$ is $q_{n,1}(C)$, up to rescaling.

In the case $n$ is odd we have
$$\{F,x_j\}=\sum_i \frac{\pa F}{\pa x_i}\{x_i,x_j\}=\sum_i \frac{\pa F}{\pa x_i}\Om_{ij}=0$$
by \eqref{FOm-id}. Hence, the assertion of the theorem follows in this case from Lemma \ref{Cas-lem}.

Similarly, in the case $n$ is even, we deduce from \eqref{F12Om-id} that $\{F_1,x_j\}=\{F_2,x_j\}=0$.

%\subsection{Additional remarks}
%Characterization of Feigin-Odesskii brackets for odd $n$}

%Unique such that $\Sec^i C$ are Poisson and generic rank on each is $2i-2$.???

%\subsection{Proof of Theorem A(iii)}

%Theorem A(i)(ii) implies that the bracket $\{x_i,x_j\}=\Om_{ij}$ preserves the ideal of $\Sec^r C$.

%Conversely, assume we have a bivector $\{x_i,x_j\}=Q_{ij}$ such that $\{\OO,\II\}\sub \II$ on $\P^{n-1}$.
%The case when $\Sec^r C$ is given by the equation $F=0$.
%First, claim that $\{f,F\}$ is divisible by $F$ for any $f\in \OO$. Indeed, by assumption $\{f,F/g\}$ is divisible by $F/g$, so ???
%Next, suppose $x$ is any linear form. Then $\{F/x^n,F\}=-\{x^n,F\}/x^{2n}$ is divisible by $F$

\subsection{Proof of Theorem B}

We start with the following description of
$$\phi^{-1}:\P H^0(C,V_0)\dashrightarrow \P\Ext^1(\OO,L)\simeq \P H^0(C,L)^*.$$ 
Given a generic global section $s\in H^0(C,V_0)$, the corresponding map $s:\OO\to V_0$ is an embedding of a subbundle, and we have a canonical
identification of $V_0/s(\OO)$ with $\det(V_0)=L$. Hence, we get an extension of $L$ by $\OO$.
Note that the Serre duality isomorphism $\Ext^1(\OO,L)\simeq H^0(C,L)^*$ associates with an extension
$$0\to \OO\to E\to L\to 0$$
the corresponding coboundary homomorphism $H^0(C,L)\to H^1(C,\OO)\simeq k$.
The exact sequence of cohomology shows that the kernel of this homomorphism coincides with the image of the map $H^0(C,E)\to H^0(C,L)$.

Thus, $\phi^{-1}$ associates with a generic $s\in H^0(C,V_0)$ the unique functional $\xi\in H^0(C,L)^*$ (up to rescaling), such that $\ker(\xi)$ is equal to the image of
the map
$$d_s:H^0(C,V_0)\to H^0(C,V_0/s(\OO))\simeq H^0(C,L).$$
Recall that an isomorphism $\a_s:V_0/s(\OO)\rTo{\sim} L$
induces an isomorphism 
$$\b_s:\det(V_0)\rTo{\sim} L: s\we x\mapsto \a(x).$$ 
Note that $\b_s$ does not depend on $s$ up to rescaling. 

Thus, up to rescaling, the map $d_s$ can be identified with the map 
$$\Phi(s,?):H^0(C,V_0)\to H^0(C,L),$$ 
where $\Phi$ is the Klein matrix \eqref{Klein-matrix}.
In other words, $\xi=\phi^{-1}(s)$ is characterized by the condition 
$$\ker(\xi)=\im \Phi(s,?).$$

Let us set $A:=H^0(C,V_0)$ and $B:=H^0(C,L)$ for brevity (note that $B$ is the space of linear forms on our projective space $\P^{n-1}=\P H^0(C,L)^*$), so that $\Phi$
can be viewed as a skew-symmetric linear map
$$\Phi:A\to A^*\ot B.$$ 
We showed above that the birational map 
$$\phi^{-1}:\P A\dashrightarrow \P B^*$$
sends a generic $a$ to the unique functional $b^*$ (up to rescaling) such that $b^*$ vanishes on the image of $\Phi(a,?):A\to B$.
Now we recall that with $\Phi$ we can associate canonically (up to rescaling) an element
$\si_{n-2}(\Phi)\in B^*\ot S^{n-2}(A^*)$, such that the composition
$$A\rTo{\Phi(a,?)}B\rTo{\si_{n-2}(\Phi)(a)}k$$
is zero (see Lemma \ref{si-Phi-lem}). Furthermore, since $\Phi(a,?)$ has rank $n-1$ for generic $a$, we have
$\si_{n-2}(\Phi)(a)\neq 0$ for such $a$ (see Lemma \ref{si-Phi-lem}(iv)).
This implies that
$$\phi^{-1}(a)=\si_{n-2}(\Phi)(a)$$
in $\P B^*$.
If we choose a basis in $B$, then $\si_{n-2}(\Phi)$ is given by $n$ polynomials $(f_1,\ldots,f_n)$, where $f_i\in S^{n-2}(A^*)$, and we have
$$\phi^{-1}(a)=(f_1(a):\ldots:f_n(a)).$$

It remains to prove the formula for $\phi:\P B^*\to \P A$.
We can rewrite the complex \eqref{Klein-complex} as
$$0\to R(-n)\rTo{p^T} A\ot R(-r-1)\rTo{\Phi} A^*\ot R(-r)\rTo{p} R$$
where $R=S^*(B)$.
This complex shows that for each $b^*\in B^*$ the specialization $\Phi_{b^*}:A\to A^*$ satisfies
\begin{equation}\label{Phi-b*-p-eq}
\Phi_{b^*}(p(b^*)^T)=0,
\end{equation}
where $p(b^*)^T\in A$. 

We need to check that $\phi(b^*)=p(b^*)^T$. Equivalently, setting 
$$a:=p(b^*)^T,$$ 
we need to prove that 
$$\phi^{-1}(a)=b^*.$$
By the above description of $\phi^{-1}$, it is enough to check that $b^*$ annihilates the image of $\Phi(a,?):A\to B$ (since the latter image is a hyperplane in $B$).
In other words, we need to check that $b^*\circ \Phi(a,?)=0$. But we have
$$b^*\circ \Phi(a,?)=\Phi_{b^*}(a)=0$$
by \eqref{Phi-b*-p-eq}. This ends the proof.

\subsection{Proof of Theorem D}

First, let us construct $\wt{\psi}:S^k(\Hom(V_{k+1},V_k))\to H^0(C,V_{k+1})$.
A generic morphism $f:V_{k+1}\to V_k$ fits into an exact sequence
$$0\to \OO\rTo{s} V_{k+1} \rTo{f} V_k\to 0$$
and our goal is to find a degree $k$ polynomial formula for $s\in H^0(C,V_{k+1})$ in terms of $f$.
Let us consider the induced map 
$$\a_f:\det(V_{k+1})\ot V_{k+1}^\vee\simeq {\bigwedge}^kV_{k+1} \stackrel{{\bigwedge}^k f}{\longrightarrow}   {\bigwedge}^kV_k\simeq \det(V_k).$$
Since $\det(V_k)=\det(V_{k+1})$, we can view the dual map $\a_f^\vee$ as a map $\OO\to V_{k+1}$ and we have a well known identity $f\circ \a_f^\vee=0$
(which follows from the vanishing of ${\bigwedge}^{k+1}(f)$). Thus, 
$$\wt{\psi}(f)=\a_f^{\vee}\in H^0(C,V_{k+1})$$
is the required degree $k$ polynomial map.

To find $\wt{\phi}$, we note that it should associate to a generic $e\in \Ext^1(V_k,\OO)$ a unique (up to rescaling) $f\in \Hom(V_{k+1},V_k)$ such that $e\circ f=0$.
Now we can find an autoequivalence $\Phi$ of $D^b(C)$ such that
$$\Phi(V_{k+1})=\OO, \ \ \Phi(V_k)=V_{l+1}, \ \ \Phi(\OO[1])=V_l,$$
where $l\equiv -(k+1)^{-1} \mod (n)$, $0<l<n$. Indeed, this is essentially a statement about the action of $\SL_2(\Z)$ on the pairs of primitive vectors in $\Z^2$ (by
looking at the action of autoequivalences on pairs $(\deg,\rk)$).
To find $l\mod(n)$ one uses the invariant $\a(v_1,v_2)\in (\Z/\det(v_1,v_2))^*$ of such a pair defined by $v_1\equiv \a(v_1,v_2)\cdot v_2 \mod \det(v_1,v_2)\Z^2$
(see \cite[Sec.\ 4]{P98}).

The above autoequivalence $\Phi$ identifies the composition map 
$$\Hom(V_{k+1},V_k)\ot \Ext^1(V_k,\OO)\to \Ext^1(V_{k+1},\OO)$$ 
we are interested in with the composition map
$$\Hom(\OO,V_{l+1})\ot \Hom(V_{l+1},V_l)\to \Hom(\OO,V_l).$$
Now we can apply our construction of $\wt{\psi}$ to the latter composition map, and view it via the above isomorphisms
as the needed degree $l$ map $\wt{\phi}$ from $\Ext^1(V_k,\OO)$ to $\Hom(V_{k+1},V_k)$.

Similarly, to find $\wt{\rho}$, we find an autoequivalence $\Psi$ of $D^b(C)$ such that
$$\Psi(V_k[-1])=\OO, \ \ \Psi(\OO)=V_{m+1}, \ \ \Psi(V_{k+1})=V_m,$$
where $m\equiv -1-k^{-1} \mod (n)$, $0<m<n$. As above, this gives the required degree $m$ map $\wt{\psi}$ from $H^0(C,V_{k+1})$ to $\Ext^1(V_k,\OO)$.

\begin{rmk}\label{two-map-rem}
It is easy to see that if two sets of homogeneous polynomials of degree $d$, $(f_1,\ldots,f_n)$ and $(\wt{f}_1,\ldots,\wt{f}_n)$ induce the same birational maps 
$\P^{n-1}\dashrightarrow \P^{n-1}$ and if $(f_1,\ldots,f_n)$ have no common factor, then there exists a constant $c$ such that $\wt{f}_i=cf_i$.
Indeed, the corresponding rational function $\wt{f}_1/f_1=\ldots=\wt{f}_n/f_n$ on $\P^{n-1}$ has to be regular, hence, constant.
Since the polynomials $(p_1,\ldots,p_n)$ from Theorem B clearly have no common factor (as they define a codimension $3$ variety), it follows that in the case $k=1$,
our map $\wt{\phi}$ agrees with $(p_1,\ldots,p_n)$.
\end{rmk}

\end{document}